\newcommand*\rfrac[2]{{}^{#1}\!/_{#2}}
\newtheorem{prop}{Proposition}[section]
\newtheorem{theorem}{Theorem}[section]
\newtheorem{lemma}{Lemma}[section]
 \newenvironment{proof}[1][Proof]{\begin{trivlist}
\item[\hskip \labelsep {\bfseries #1}]}{\end{trivlist}}
\title{Poisson Regression with Survey Data}
\author{Seyed Jalil Kazemitabar}
\date{{}}
\begin{document}
\maketitle
\section*{Abstract}
We propose a way to remove the bias of a Poisson regression when the subjects are partially observed. In this paper we address this issue under certain assumptions about the missing-data generating process. We fix the total number of observed subjects and allow individual subjects to be observed randomly. This theme is relevant when a researcher is provided with a survey data not covering the whole population. A highlighting result is that if subjects are observed according to a random sampling without replacement, a Poisson distribution with sampling-ratio-adjusted mean is an asymptotically consistent model of the observed count variable. An innovative asymptotic regime is employed to derive the results.

\section{Motivation}
Suppose $N_j$ is the number of subjects for case $j$ with the feature vector $X_j$. Say $N_j$ is the size of population in city $j$. Let's assume the following Poisson DGP holds for the count variable $N_j$:
\begin{equation}
N_j \sim \text{Poisson}(X_j^\prime \beta),\quad j=1,\ldots,J
\end{equation}
The goal is to estimate coefficients, $\beta$. 

Suppose further that we are given a random sample from the entire set of subjects.  Denote $n_j$ as the number of subjects observed out of $N_j$ incidences. The state of the art in the literature would assume that each subject pertaining case $j$ is observed independently with probability $\gamma_j$, the conditional distribution of $n_j$ would be:
\begin{equation}
n_j \sim \text{Binomial}(N_j, \gamma_j),\quad j=1,\ldots,J
\end{equation}
The $J$ conditional pdf's in (2) will be independent, conditioning on $N_j,\; j=1,\ldots,J$. Solving for unconditional pdf of $n_j$'s would be somewhat straightforward. [citing some previous works here] We relax the assumption of $\gamma_j$ being constant and known and replace it with a random sampling assumption from the entire $\sum N_j$ subjects, so that $\sigma n_j$ is fixed. This assumption is the point of distinction between our paper and the previous works. We merely see the vector of $n_j$'s and the total sum of population, $\sum N_j$. One conclusion of this assumption is that the number of subjects observed from a case is proportional in expectation to the size of the case, conditional on the size of the case. Our assumption poses challenging conditions on the joint conditional pdf of observed count variables. First, $\gamma_j=\frac{N_j}{\sum N_j}$ is not known since real size of each case is not observable. Second, $n_j$'s are not independent anymore. The more subjects sampled from one case the less likely subjects from the other case are sampled out. We tackle this issues by introducing an appropriate asymptotic regime in the following sections.

This assumption is relevant because often there is no census of the entire population and only a restricted survey of subjects (e.g. people) is available. While there is a chance that a survey is stratified across cases (e.g cities), we go for a random sampling assumption. We will examine both sampling with replacement and sampling without replacement and leave the stratified sampling case to later works.

In section 2, we will describe the sampling assumptions and the ensuing conditional distribution of $n_j$'s. The asymptotic regime is mentioned in section 3. Sections 4 and 5 approximate asymptotic conditional pdf under the specified asymptotic regime. Section 6 will try to compute the unconditional pdf of observed count variables. The paper ends with a discussion of potential improvements and further works.

\section{Randomly Sampled (Observed) Count Variable}
\subsection{Random Sampling with Replacement}
Take $N_*$ and $n_*$ as the sum of $N_j$'s and $n_j$'s respectively and denote $\textbf{n}=(n_1,n_2^,\ldots,n_J)$ the vector of observed counts. Suppose we only observe a random sample with replacement of a fixed size $n_*$ from the entire set of $N_*$ subjects. In other terms, the following holds about the distribution of $n_j$'s:
\[
\textbf{n}\: |\: N_1,\ldots, N_J
 \sim \text{Multinom}\left(n_*,\; p_1,\ldots, p_J\right)
\]
where $p_j=\frac{N_j}{N_*},\; j=1,\ldots,J$. Random sampling with replacement (RW), though not a convincing assumption regarding a survey data of the population, is held as a starting point prior to analyzing the more appealing assumption introduced in the following section.

\subsection{Random Sampling without Replacement}
Keeping the same notations, assume instead that the subjects are observed through a random sampling procedure without replacement (RWO),
\[
\textbf{n}\: |\: N_1,\ldots, N_J \sim \text{Multi. Hypergeometric}(n_*, N_*, N_1,\ldots,N_J)
\]

Unconditional pdf of count variables is a complicated combinatorial expression. A more useful formula is the asymptotically consistent estimation of the joint pdf. We derive it in two steps, first, asymptotics of the conditional distribution and then the asymptotics of the unconditional pdf. 

\section{The Asymptotic Regime}
\subsection{Why conventional regime is not appropriate}
It is known that Multinomial and Multi Hypergeometric random vectors converge in distribution to Multivariate Normal, under the following asymptotic regime:
\begin{itemize}
\item [(P1)] $n_*\to\infty$
\item [(P2)] $n_*/N_*$ and $p_j=\frac{N_j}{N_*}$ for $j=1,\ldots,J$ are fixed
\end{itemize}
Indeed, asymptotic conditional distribution of $\textbf{n}$ would be:
\[
\begin{array}{c}
\textbf{n}\: |\: N_1,\ldots, N_J\overset{d}{\longrightarrow}\mathbb{N}(\mu,\Sigma) \\
\mu=(n_*\; p_1,\ldots,n_*\; p_J),\; \Sigma=[n_*\; p_i\; p_j]_{ij}
\end{array}
\]
(P1) and (P2) requires that $N_j$'s can be sufficiently large. This would in return impose an asymptotic assumption on the features, $X_j$'s, for all cases. This latter could be undesirable in some situations. Any reasonable distribution of features induce such small enough values that dis-validate any asymptotic approximation based on this assumption.

On the other hand, Poission pdf of $N_j$'s can not be combined with the conditional normal pdf of $\textbf{n}$ easily. [describe further]

\subsection{An alternative}
Our asymptotic regime impose no assumption on $X_j$'s. Instead of growing the size of all cases we increase the number of cases, in asymptotics. 

Since we are interested in asymptotic behavior of observed counts, we assume an infinite sequence of cases in which the actual counts of the subjects are $(N_1,N_2,N_3,\ldots)$. Throughout, this sequence is assumed nonrandom, or equivalently, our statements are conditioned on it. For a fixed $J$, we assume that the counts $\textbf{n}^{(J)}=(n_1^{(J)},n_2^{(J)},\ldots,n_J^{(J)})$ are sampled from $(N_1,\ldots,N_J)$ randomly with replacement. Let $n_*^{(J)}$ be the total number of subjects sampled at this stage, which we assume to be a known nonrandom quantity. Note that $n_*^{(J)}=\sum_{j=1}^{J}n_*^{(j)}$.

We are interested in the asymptotic distribution of $\textbf{n}^{(J)}$ as $J\to\infty$.
Let $N_*^{(J)}=\sum_{j=1}^{J}N_j$ be the total actual count in the first $J$ cases, and $p_j^{(J)}=N_j/N_*^{(J)}$ be the corresponding fraction. We set $\textbf{P}^{(J)}=(p_1^{(J)},\ldots,p_J^{(J)})$. We consider the following asymptotic regime:
\begin{itemize}
\item [(A1)] Infinite population size: $\sum_{j=1}^{\infty}N_j=\infty$, or equivalently, $N_*^{(J)}\to\infty$
\item [(A2)] Infinite sample size: $n_*^{(J)}\to\infty$
\item [(A3)] Asymptotic fraction stability: $n_*^{(J)}/N_*^{(J)}\to\gamma\in (0,1]$.
\end{itemize}
All the limits are considered as $J\to\infty$. These assumptions imply that $p_j^{(J)}\to 0$, not a conventional asymptotic regime to approximate Multinomial distribution. Indeed, this is the most appropriate regime given that $N_j$'s are modeled as bounded functions of case characteristics (in section 6).

\section{Conditional PDF under Random Sampling with Replacement}
As mentioned earlier, the WR sampling scheme implies the following about the distribution of $\textbf{n}^{(J)}$:
\[
\textbf{n}^{(J)} \sim \text{Multinomial}(n_*^{(J)},\textbf{p}^{(J)})
\]
Our first result identifies finite-dimensional distributions of $\textbf{n}^{(J)}$. We will use $[J]$ to denote the set of integers $\{1, \dots , J \}$ and $\mathbb{N}$ the set of all integers. For any $S\subset [J]$, let $\textbf{n}_S^{(J)}=(n_j^{(J)},j\in S)$.

\begin{prop}
Under (A1-3), for any subset $S$ of integers, of fixed cardinality $|S|$, we have 
\[
\textbf{n}_S^{(J)} \overset{d}{\longrightarrow} \bigotimes_{j=1}^{|S|} \text{Poisson}(\lambda_j), \quad
    \text{as $J \to \infty$}
\]
where $\lambda_j=\gamma N_j$.
\end{prop}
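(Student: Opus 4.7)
\bigskip
\noindent\textbf{Proof plan.}

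My plan is to combine two standard facts: (i) the coordinates of a multinomial vector that we ignore can be collapsed into a single ``other'' bin, yielding a lower-dimensional multinomial; and (ii) the classical Poissonization of the multinomial in the regime where $n \to \infty$, each $p_j \to 0$, and $n p_j \to \lambda_j$. Assumptions (A1)--(A3) are designed exactly to place us in regime (ii) for any finite collection of coordinates, since for $j \in S$ (fixed) the value $N_j$ is a constant in $J$, while $N_*^{(J)} \to \infty$ forces $p_j^{(J)} \to 0$, and (A3) then gives $n_*^{(J)} p_j^{(J)} = \gamma_J N_j \to \gamma N_j$ with $\gamma_J := n_*^{(J)}/N_*^{(J)}$.

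First I would fix $S$ with $|S|=k$ and choose $J$ large enough that $S\subset[J]$. By the marginal property of the multinomial, the vector $(\textbf{n}_S^{(J)}, n_*^{(J)} - \sum_{j\in S} n_j^{(J)})$ is itself multinomial with parameters $(n_*^{(J)},\, (p_j^{(J)})_{j\in S},\, 1-\sum_{j\in S} p_j^{(J)})$. Then I would work with the probability generating function. Setting the dummy variables for the coordinates outside $S$ equal to $1$, the joint PGF of $\textbf{n}_S^{(J)}$ in variables $(z_j)_{j\in S}$ reads
\[
G_J(z) \;=\; \Bigl(1 + \sum_{j\in S} p_j^{(J)}(z_j-1)\Bigr)^{n_*^{(J)}}.
\]

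Next I would take the limit $J\to\infty$. Using $n_*^{(J)} p_j^{(J)} \to \gamma N_j$ and the elementary fact that $(1+a_J/n)^{n}\to e^{a}$ whenever $a_J\to a$ and $n\to\infty$ (here $n = n_*^{(J)}\to\infty$ by (A2)), I obtain
\[
G_J(z) \;\longrightarrow\; \exp\!\Bigl(\sum_{j\in S} \gamma N_j (z_j-1)\Bigr) \;=\; \prod_{j\in S} \exp\!\bigl(\gamma N_j (z_j-1)\bigr),
\]
which is exactly the PGF of $\bigotimes_{j\in S}\mathrm{Poisson}(\gamma N_j)$. Pointwise convergence of PGFs on an open neighborhood of the origin in $[0,1]^{|S|}$ implies convergence of the probability mass functions and hence convergence in distribution for integer-valued vectors, giving the claim.

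I do not anticipate a hard obstacle here; the only point requiring a little care is the uniformity of the error term when passing to the limit inside $\bigl(1 + \sum_{j\in S} p_j^{(J)}(z_j-1)\bigr)^{n_*^{(J)}}$, i.e.\ controlling the second-order term $O\bigl(n_*^{(J)}(\sum_{j\in S} p_j^{(J)})^2\bigr)$. Since $\sum_{j\in S} p_j^{(J)} = O(1/N_*^{(J)})$ for fixed $S$ and $n_*^{(J)}/N_*^{(J)}\to\gamma$, this term is $O(1/N_*^{(J)})\to 0$, so the log-expansion of $G_J(z)$ converges to the correct limit. An alternative, equally short route is a direct Stirling/ratio estimate on the multinomial pmf, which produces the three factors $\prod_{j\in S}(\gamma N_j)^{k_j}/k_j!$, $1$ (from the rising factorial), and $e^{-\sum_{j\in S}\gamma N_j}$ (from the ``other'' bin), but I find the PGF argument cleaner.
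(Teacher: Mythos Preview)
Your argument is correct and follows essentially the same route as the paper: the paper also collapses the coordinates outside $S$, writes the joint transform of $\textbf{n}_S^{(J)}$ as $\bigl(1+\sum_{j\in S}p_j^{(J)}(w_j-1)\bigr)^{n_*^{(J)}}$, and passes to the limit via $(1+a_n/n)^n\to e^{a}$. The only cosmetic difference is that the paper uses the characteristic function (taking $w_j=e^{it_j}$) while you use the probability generating function (taking $w_j=z_j$); your handling of the second-order error and of $n_*^{(J)}p_j^{(J)}\to\gamma N_j$ is in fact slightly more careful than the paper's.
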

\begin{proof}
Without loss of generality, let $S=\{1,\ldots,K\}$ for some fixed $K\leq J$. $n_j^{(J)}$ is a sum of independent Bernoulli variables denoted by $B_1^j,\ldots,B_{n_*^{(J)}}^j$, for any $j\in [J]$. For any draw, $k$, we have $\sum_{j=1}^{J}B_k^j=1$. We can write the joint characteristic function of $\textbf{n}_S^{(J)}$ and take its limit:
\[
\begin{array}{rcl}
\varphi_{S}^{(J)}(t_1,\ldots,t_{K})&=&\mathrm{E} \text{ exp}\left(\sum_{j=1}^{K} it_j(B_1^j+\cdots+B_{n_*^{(J)}}^j)\right) \\ 
&=&\left(\mathrm{E} \text{ exp } i(t_1B_1^1+\cdots+B_1^{K})\right)^n \\
&=&\left(p_1\cdot e^{it_1}+\cdots+p_{K} \cdot e^{it_{K}}+(1-p_1-\cdots-p_{K}) \right)^n \\
&=&\left(\cfrac{\lambda_1 (e^{it_1}-1)+\cdots+\lambda_{K} (e^{it_{K}}-1)}{n}+1\right)^n \\
&\xrightarrow{n\rightarrow \infty}&\text{exp }\left(\lambda_1 (e^{it_1}-1)+\cdots+ \lambda_{K} (e^{it_{K}}-1) \right)
\end{array}
\]
which is the joint c.f. of a vector of independent Poisson variables.
\end{proof}

Unfortunately, this theorem is only useful if we are willing to use a small portion of the data to estimate the count model. According to this proposition, only if $K$ is sufficiently smaller than $J$, Poisson distribution works as a close approximation for Multinomial. However, similar result holds if $K=J$.

\begin{theorem}
Under (A1-3),
\[
\textbf{n}^{(J)} \overset{d}{\longrightarrow} \bigotimes_{j=1}^{J} \text{Poisson}(\lambda_j), \quad \text{conditioned on } \Sigma^J_{j=1}n_j=n_*^{(J)}
\]
, $\text{as $J \to \infty$}$.
\end{theorem}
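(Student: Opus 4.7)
The plan is to recognize that this theorem, despite the $\overset{d}{\to}$ notation, is essentially an exact distributional identity rather than a genuine limiting statement. Proposition 1 has already shown that any fixed-cardinality marginal of $\textbf{n}^{(J)}$ converges to a product of independent Poissons, but when $K=J$ we cannot expect true independence because the constraint $\sum_j n_j^{(J)}=n_*^{(J)}$ is always active. The natural way to reconcile these two facts is the classical Poisson--Multinomial conditioning identity, which states that if $Y_1,\ldots,Y_J$ are independent with $Y_j\sim\text{Poisson}(\mu_j)$, then the conditional law of $(Y_1,\ldots,Y_J)$ given $\sum_j Y_j=n$ is exactly $\text{Multinomial}(n,\mu_j/\mu_*)$, where $\mu_*=\sum_l\mu_l$. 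This is what I would take as the key tool.

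First I would state (or re-derive via a one-line joint-PMF calculation) the conditioning identity above. Then I would apply it with $\mu_j=\lambda_j=\gamma N_j$ and $n=n_*^{(J)}$, observing the crucial cancellation
\[
\frac{\lambda_j}{\sum_{l=1}^J \lambda_l}=\frac{\gamma N_j}{\gamma N_*^{(J)}}=\frac{N_j}{N_*^{(J)}}=p_j^{(J)}.
\]
Thus the right-hand side of the theorem is, at every finite $J$, exactly the distribution $\text{Multinomial}(n_*^{(J)},\textbf{p}^{(J)})$, which by construction is the law of $\textbf{n}^{(J)}$. Convergence in distribution therefore holds trivially, since the two laws agree for all $J$. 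Assumption (A3) enters only in calibrating the Poisson intensities to the value $\gamma N_j$ that appeared in Proposition 1; assumptions (A1)--(A2) are not strictly needed for this identity, but are inherited from the setup.

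As an optional sanity check I would verify the same conclusion by characteristic functions, mimicking the computation in the proof of Proposition 1. The conditional c.f.\ of $\bigotimes_j\text{Poisson}(\lambda_j)$ given $\sum_j Y_j=n_*^{(J)}$ reduces, by the multinomial theorem and the fact that a sum of Poissons is Poisson, to $\bigl(\sum_j p_j^{(J)}e^{it_j}\bigr)^{n_*^{(J)}}$, which is precisely the c.f.\ of $\text{Multinomial}(n_*^{(J)},\textbf{p}^{(J)})$. Because the whole argument is an exact identity, there is no real obstacle; the only thing to be careful about is parsing the statement correctly, namely that the conditioning on the right-hand side undoes the ``independence'' that Proposition 1 only obtained for fixed finite marginals.
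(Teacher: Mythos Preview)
Your proposal is correct. Both you and the paper arrive at the same underlying fact---the classical Poisson--Multinomial conditioning identity---but by somewhat different routes.

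The paper proceeds by direct PMF manipulation: it rewrites the multinomial mass function as
\[
\frac{n_*^{(J)}!}{(n_*^{(J)}/e)^{n_*^{(J)}}}\;\prod_{j=1}^{J} e^{-\lambda_j}\frac{\lambda_j^{n_j}}{n_j!},
\]
then applies Stirling's formula to the prefactor to obtain $\sqrt{2\pi n_*^{(J)}}\,(1+O((n_*^{(J)})^{-1}))$, and finally recognizes \emph{a posteriori} that this prefactor must equal $\bigl[\text{Pr}\bigl(\sum_j \text{Poisson}(\lambda_j)=n_*^{(J)}\bigr)\bigr]^{-1}$ because the expression sums to one. You instead name the conditioning identity up front, apply it with $\mu_j=\gamma N_j$, observe the cancellation $\mu_j/\mu_*=p_j^{(J)}$, and conclude that the two laws agree \emph{exactly} for every $J$. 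Your route is shorter and makes clear that nothing asymptotic is really happening; the paper's route has the minor bonus of producing the explicit Stirling approximation to the normalizing constant, and implicitly uses the finite-sample ratio $n_*^{(J)}/N_*^{(J)}$ rather than the limit $\gamma$ (which, as you note, is immaterial since any common scalar cancels in the conditioning). Your remark that (A1)--(A2) are not strictly needed is accurate and a useful observation.
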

\begin{proof}
\[
\begin{array}{rcl}
\text{Pr}(\textbf{n}^{(J)}=\textbf{n})&=&\cfrac{n_*^{(J)}!}{n_1!\cdots n_J!}\times \left(\cfrac{N_1}{N}\right)^{n_1}\cdots \left(\cfrac{N_J}{N}\right)^{n_J} \\
&=&\cfrac{n_*^{(J)}!}{(\rfrac{n_*^{(J)}}{e})^{n_*^{(J)}}}\times e^{\frac{n_*^{(J)}N_1}{N}}\cfrac{(\frac{n_*^{(J)}N_1}{N})^{n_1}}{n_1!}\times \cdots \times e^{\frac{n_*^{(J)}N_J}{N}}\cfrac{(\frac{n_*^{(J)}N_J}{N})^{n_J}}{n_J!} = \\
&=&\sqrt{2\pi n_*^{(J)}}\;(1+O({n_*^{(J)}})^{-1}) \times e^{\lambda_1}\cfrac{\lambda_1^{n_1}}{n_1!} \times \cdots \times e^{\lambda_J}\cfrac{\lambda_J^{n_J}}{n_J!}
\end{array}
\]
I have used Stirling's estimation for $n_*^{(J)}!$ in the second equation. The sum of last line over all $\textbf{n}=(n_1,\ldots,n_J)$ with $\sum_{j=1}^{J}n_j=n_*^{(J)}$ equals one, therefore $\sqrt{2\pi n_*^{(J)}}\;(1+O({n_*^{(J)}})^{-1})=\text{Pr}(\sum_{j=1}^{J}\text{Poisson}(\lambda_j)=n_*^{(J)})^{-1}$.
\end{proof}

\section{Conditional PDF under Random Sampling without Replacement}
Assuming that the sampled count variable is generated by a random sampling procedure without replacement (WOR),
\[
\textbf{n}^{(J)} \sim \text{Multi. Hypergeometric}(n_*^{(J)}, N_*^{(J)}, N_1,\ldots,N_J)
\]
I will derive the asymptotic behavior of the Multivariate Hypergeometric random vector under the same limiting conditions introduced in the previous section.
\begin{prop}
Under (A1-3), for any subset $S$ of integers, of fixed cardinality $|S|$, we have 
\[
\textbf{n}_S^{(J)} \overset{d}{\longrightarrow} \bigotimes_{j=1}^{|S|} \text{Binomial}(N_j,\gamma), \quad
    \text{as $J \to \infty$}
\]
\end{prop}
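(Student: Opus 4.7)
The natural strategy is to compute the joint probability mass function of $\mathbf{n}_S^{(J)}$ directly and match it to the limiting product of Binomials. Without loss of generality take $S=\{1,\ldots,K\}$. The marginal of a multivariate hypergeometric is again multivariate hypergeometric, with the ``remaining'' category having mass $N_*^{(J)}-L$ where $L=\sum_{j\in S}N_j$. So for any fixed non-negative integers $(n_1,\ldots,n_K)$ with $n_j\le N_j$, writing $\ell=\sum_{j=1}^K n_j$, I would start from
\[
P\bigl(\mathbf{n}_S^{(J)}=(n_1,\ldots,n_K)\bigr) = \frac{\prod_{j=1}^K \binom{N_j}{n_j} \cdot \binom{N_*^{(J)}-L}{n_*^{(J)}-\ell}}{\binom{N_*^{(J)}}{n_*^{(J)}}}
\]
and aim for pointwise convergence of this PMF as $J\to\infty$.

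The factors $\binom{N_j}{n_j}$ are already the binomial coefficients appearing in the target product of $\mathrm{Binomial}(N_j,\gamma)$ PMFs, so all the asymptotic work lives in the ratio
\[
R^{(J)} := \frac{\binom{N_*^{(J)}-L}{n_*^{(J)}-\ell}}{\binom{N_*^{(J)}}{n_*^{(J)}}} = \frac{\prod_{i=0}^{\ell-1}(n_*^{(J)}-i) \cdot \prod_{i=0}^{L-\ell-1}(N_*^{(J)}-n_*^{(J)}-i)}{\prod_{i=0}^{L-1}(N_*^{(J)}-i)}.
\]
Under (A1-3) we have $N_*^{(J)}\to\infty$, $n_*^{(J)}\to\infty$, $n_*^{(J)}/N_*^{(J)}\to\gamma$, while $L$ and $\ell$ stay fixed. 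Dividing numerator and denominator by $(N_*^{(J)})^L$ and passing to the limit yields $R^{(J)} \to \gamma^\ell(1-\gamma)^{L-\ell}$, since each bounded shift becomes asymptotically negligible relative to $N_*^{(J)}$ or $n_*^{(J)}$.

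Combining the pieces gives pointwise convergence of the joint PMF to $\prod_{j=1}^K \binom{N_j}{n_j}\,\gamma^{n_j}(1-\gamma)^{N_j-n_j}$, exactly the joint PMF of independent $\mathrm{Binomial}(N_j,\gamma)$ variables, and convergence in distribution on the discrete lattice follows. The asymptotic independence is automatic from the product form, and reflects the intuition that once the ``other'' category is enormous, sampling a few units from the cases in $S$ barely perturbs the probabilities of the remaining in-$S$ cases. I do not expect a substantive obstacle: the whole argument reduces to bookkeeping for the three factorial ratios, and the only care needed is verifying that the bounded shifts $L$, $\ell$, $L-\ell$ are indeed negligible compared to $N_*^{(J)}$ and $n_*^{(J)}$ under the alternative regime (A1-3).
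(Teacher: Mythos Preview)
Your proposal is correct and follows essentially the same approach as the paper: both compute the multivariate hypergeometric PMF directly, isolate the binomial coefficients $\binom{N_j}{n_j}$, and reduce the remaining ratio of factorials to a product of a bounded number of terms each converging to $\gamma$ or $1-\gamma$ under (A1--3). The only cosmetic difference is that the paper telescopes the ratio into $K$ separate fractions (one per case $j$), whereas you handle the entire ratio $R^{(J)}$ in one shot via the totals $L=\sum N_j$ and $\ell=\sum n_j$; the asymptotic bookkeeping is identical.
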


\begin{proof}
Without loss of generality, let $S=\{1,\ldots,K\}$ for some fixed $K\leq J$. $\text{Pr}(\textbf{n}_S^{(J)}=\textbf{n})=\cfrac{\binom{N_1}{n_1}\cdots \binom{N_{K}}{n_{K}} \binom{N_*^{(J)}-N_1-\cdots-N_{K}}{n_*^{(J)}-n_1-\cdots-n_{K}}}{\binom{N_*^{(J)}}{n_*^{(J)}}}$ can be written as:
\[
\begin{array}{ll}
\binom{N_1}{n_1}\cdots \binom{N_{K}}{n_{K}} \times &
\frac{n_*^{(J)}!}{(n_*^{(J)}-n_1-\cdots-n_{K})!} \times
\frac{(N_*^{(J)}-n_*^{(J)})!}{(N_*^{(J)}-N_1-\cdots-N_{K}-n_*^{(J)}+n_1+\cdots+n_{K})!} \times 
 \frac{(N_*^{(J)}-N_1-\cdots-N_{K})!}{N_*^{(J)}!} =\\
\binom{N_1}{n_1}\cdots \binom{N_{K}}{n_{K}} \times &
\cfrac{\prod_{i=0}^{n_1-1} (n_*^{(J)}-i)\times \prod_{i=0}^{N_1-n_1-1} (N_*^{(J)}-n_*^{(J)}-i)}{\prod_{i=0}^{N_1-1} (N_*^{(J)}-i)} \times \\
&  \cfrac{\prod_{i=0}^{n_2-1} (n_*^{(J)}-n_1-i)\times \prod_{i=0}^{N_2-n_2-1} (N-n_*^{(J)}-N_1+n_1-i)}{\prod_{i=0}^{N_2-1} (N_*^{(J)}-N_1-i)} \times \cdots \times \\
&\frac{\prod_{i=0}^{n_{K}-1} (n_*^{(J)}-n_1-\cdots-n_{K-1}-i)\times \prod_{i=0}^{N_{K}-n_{K}-1} (N_*^{(J)}-n_*^{(J)}-N_1+n_1-\cdots-N_{K-1}+n_{K-1}-i)}{\prod_{i=0}^{N_{K}-1} (N_*^{(J)}-N_1-N_{K-1}-i)}
\end{array}
\]
It converges to $\binom{N_1}{n_1}\cdots \binom{N_{K}}{n_{K}} \times \gamma^{n_1}(1-\gamma)^{N_1-n_1}\times\cdots\times \gamma^{n_{K}}(1-\gamma)^{N_{K}-n_{K}}$, as $J \to \infty$. 
\end{proof}

Like section 2, similar result holds if $K=J$.

\begin{theorem}
Under (A1-3),
\[
\textbf{n}^{(J)} \overset{d}{\longrightarrow} \bigotimes_{j=1}^{J} \text{Binomial}(N_j,\gamma), \quad \text{conditioned on } \Sigma^J_{j=1}n_j=n_*^{(J)}
\]
, $\text{as $J \to \infty$}$.
\end{theorem}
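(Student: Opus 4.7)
The plan is to mirror the factorization argument used in Theorem 4.1, replacing the multinomial pmf with the multivariate hypergeometric one. Because the hypergeometric pmf already factors cleanly over cells, no Stirling approximation is required, and the identity underlying the claim will in fact hold exactly for every $J$.

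Concretely, I would start from
\[
\text{Pr}(\textbf{n}^{(J)} = \textbf{n}) = \frac{\prod_{j=1}^{J}\binom{N_j}{n_j}}{\binom{N_*^{(J)}}{n_*^{(J)}}}
\]
and exploit the conditioning constraints $\sum_j n_j = n_*^{(J)}$ and $\sum_j (N_j - n_j) = N_*^{(J)} - n_*^{(J)}$ to multiply numerator and denominator by $\gamma^{n_*^{(J)}}(1-\gamma)^{N_*^{(J)}-n_*^{(J)}}$, where $\gamma\in(0,1]$ is the limit from (A3). Distributing these powers across the products, the numerator becomes $\prod_j \binom{N_j}{n_j}\gamma^{n_j}(1-\gamma)^{N_j-n_j}$, the joint pmf at $\textbf{n}$ of independent variables $X_j\sim\text{Binomial}(N_j,\gamma)$; the denominator becomes $\binom{N_*^{(J)}}{n_*^{(J)}}\gamma^{n_*^{(J)}}(1-\gamma)^{N_*^{(J)}-n_*^{(J)}}$, which by Binomial convolution is $\text{Pr}(\sum_j X_j = n_*^{(J)})$. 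Therefore
\[
\text{Pr}(\textbf{n}^{(J)}=\textbf{n}) \;=\; \text{Pr}\!\left(X_1=n_1,\ldots,X_J=n_J \;\middle|\; \sum_{j=1}^{J} X_j=n_*^{(J)}\right),
\]
which is exactly the target conditional law.

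The identity is valid for every $J$, so convergence in distribution is trivially satisfied; (A1--A3) enter only to ensure $\gamma$ is a legitimate Binomial parameter and that the conditioning event carries positive probability for all large $J$. Consequently there is no substantive obstacle, the more so because the conditional law of independent Binomials given their common-$p$ sum is well known to be free of $p$, and the same rearrangement would succeed for any choice of $\gamma\in(0,1)$. If one wants a Stirling-type asymptotic bookkeeping analogous to the $\sqrt{2\pi n_*^{(J)}}$ normalizing factor in Theorem 4.1, the local CLT for Binomials supplies $\text{Pr}(\sum_j X_j = n_*^{(J)}) \sim [2\pi n_*^{(J)}(1-\gamma)]^{-1/2}$ under (A2--A3); but this estimate is decorative and is not needed for the convergence claim itself.
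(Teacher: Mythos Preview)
Your argument is correct and is in fact cleaner than the paper's. The paper expands $\binom{N_*^{(J)}}{n_*^{(J)}}^{-1}$ via Stirling's formula, obtaining the factor $\sqrt{2\pi(1-\gamma)n_*^{(J)}}\,(1+O(n_*^{(J)})^{-1})$ in front of the product of Binomial-type terms, and then identifies this factor as $\text{Pr}\big(\sum_j \text{Binomial}(N_j,\gamma)=n_*^{(J)}\big)^{-1}$ by the sum-to-one constraint. You bypass all of this by inserting $\gamma^{n_*^{(J)}}(1-\gamma)^{N_*^{(J)}-n_*^{(J)}}$ top and bottom and invoking the Binomial convolution $\sum_j\text{Binomial}(N_j,\gamma)\sim\text{Binomial}(N_*^{(J)},\gamma)$, which recognizes the denominator \emph{exactly} as the conditioning probability. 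The gain is twofold: your identity is non-asymptotic (valid for every $J$ and every $\gamma\in(0,1)$, since the conditional law is free of the common success probability), and it avoids the bookkeeping errors that creep into the Stirling route (the paper's displayed exponents on the $(1-\gamma)$ factors should be $N_j-n_j$, not $n_j$). The paper's approach does yield, as a by-product, the local-CLT estimate $\text{Pr}(\sum_j X_j=n_*^{(J)})\sim[2\pi(1-\gamma)n_*^{(J)}]^{-1/2}$ that you mention as decorative; but for the theorem as stated your exact argument is both shorter and stronger.
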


\begin{proof}
\[
\begin{array}{l}
\text{Pr}(\textbf{n}^{(J)}=\textbf{n})=\cfrac{\binom{N_1}{n_1}\cdots \binom{N_{J}}{n_{J}}}{\binom{N_*^{(J)}}{n_*^{(J)}}}= 
\binom{N_1}{n_1}\cdots \binom{N_{J}}{n_{J}} \times \cfrac{n_*^{(J)}!(N_*^{(J)}-n_*^{(J)})!}{N_*^{(J)}!} =\\
\binom{N_1}{n_1}\cdots \binom{N_{J}}{n_{J}} \times \frac{\sqrt{2\pi n_*^{(J)}}\; (\frac{n_*^{(J)}}{e})^{n_*^{(J)}}(1+O({n_*^{(J)}})^{-1}) \sqrt{2\pi (N_*^{(J)}-n_*^{(J)})}\; (\frac{N_*^{(J)}-n_*^{(J)}}{e})^{N_*^{(J)}-n_*^{(J)}}(1+O({N_*^{(J)}-n_*^{(J)}})^{-1})}{\sqrt{2\pi N_*^{(J)}}\; (\frac{N_*^{(J)}}{e})^{N_*^{(J)}}(1+O({N_*^{(J)}})^{-1})} =\\
\binom{N_1}{n_1}\cdots \binom{N_{J}}{n_{J}} \times \sqrt{2\pi (1-\gamma)n_*^{(J)}}(1+O({n_*^{(J)}})^{-1})\times (\frac{n_*^{(J)}}{N_*^{(J)}})^{n_1}(\frac{N_*^{(J)}-n_*^{(J)}}{N_*^{(J)}})^{n_1}\times \cdots \times (\frac{n_*^{(J)}}{N_*^{(J)}})^{n_J}(\frac{N_*^{(J)}-n_*^{(J)}}{N_*^{(J)}})^{n_J} =\\
\sqrt{2\pi (1-\gamma)n_*^{(J)}}(1+O({n_*^{(J)}})^{-1})\times \binom{N_1}{n_1}\gamma^{n_1}(1-\gamma)^{n_1}\times \cdots \times \binom{N_{J}}{n_{J}}\gamma^{n_J}(1-\gamma)^{n_J}
\end{array}
\]
I have used Stirling's estimation for $n_*^{(J)}!, N_*^{(J)}! \text{, and } (N_*^{(J)}-n_*^{(J)})!$ in the second line.  The sum of last line over all $\textbf{n}=(n_1,\ldots,n_J)$ with $\sum_{j=1}^{J}n_j=n_*^{(J)}$ equals one, therefore $\sqrt{2\pi (1-\gamma)n_*^{(J)}}(1+O({n_*^{(J)}})^{-1})=\text{Pr}(\sum_{j=1}^{J}\text{Binomial}(N_j, \gamma)=n_*^{(J)})^{-1}$.

\end{proof}

\section{Poisson Regression}
Returning to the finite-sample notation (dropping the superscripts), suppose the count model of interest is described as follows:
\[
N_j \sim \text{Poisson}(X_j^\prime \beta),\quad j=1,\ldots,J
\]
The goal is to estimate unknown parameter(s) $\beta$. We are going to find a simple form for the likelihood function. Notice that in order to identify the vector of parameters $\beta$ from the sample of $\textbf{n}=(n_1,\ldots,n_J)$ it is essential to know $N_*$. Otherwise we would be indecisive about the scale of $\beta$.

I will continue, in separate sub-sections, with the two assumptions we made about sampling procedure.

\subsection{Likelihood Function under WR}
The (asymptotic) conditional distribution of $n_j$ given $Nj$, and that of $N_j$ given $X_j$ is as follows:
\[
\begin{array}{rcl}
\text{Pr}(n_j=t|N_j)&\approx&e^{-\gamma N_j}\cfrac{(\gamma N_j)^t}{t!} \\
\text{Pr}(N_j=r|X_j)&=&e^{-X_j^\prime \beta}\cfrac{(X_j^\prime \beta)^r}{r!} 
\end{array}
\]
For a moment take $\gamma=\frac{n_*}{N_*}$ as a constant (given $X_j$). Hence the conditional distribution of $n_j$ given $X_j$ is 
\[
\text{Pr}(n_j=t|X_j)\approx\cfrac{e^{-X_j^\prime \beta}}{t!}\gamma^t\cdot \sum_{r=0}^{\infty}r^t\cfrac{(e^{-\gamma}X_j^\prime \beta)^r}{r!}
\]
Let $f(x,t)=\sum_{r=0}^{\infty}r^t\frac{x^r}{r!}$. The conditional probability will be written as $\frac{e^{-X_j^\prime \beta}}{t!}\gamma^t\cdot f(e^{-\gamma}X_j^\prime \beta,t)$. The following proposition holds about  $f(x,t)$.
\begin{prop}
Denote $\{g_t(x)\}_{t=0}^{\infty}$ as a sequence of polynomials satisfying  the following recursive equations:
\[
\begin{array}{l}
g_0(x)=1\\
g_t(x)^\prime=\sum_{m=0}^{t-1}g_m(x)\binom{t}{m},\quad t\geq 1
\end{array}
\]
Then, $f(x,t)=g_t(x)e^x,\: t=0,1,2,\ldots$.
\end{prop}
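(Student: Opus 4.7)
The plan is to induct on $t$. For the base case $t=0$, one has $f(x,0)=\sum_{r\ge 0} x^r/r! = e^x = g_0(x)e^x$. For the inductive step, assume $f(x,m) = g_m(x)e^x$ for every $m < t$, and aim to identify $h_t(x) := e^{-x} f(x,t)$ with $g_t(x)$.

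The key computation is to differentiate $f(x,t)$ term by term. Since
\[
\frac{\partial}{\partial x} f(x,t) = \sum_{r\ge 1} r^t \frac{x^{r-1}}{(r-1)!} = \sum_{s\ge 0} (s+1)^t \frac{x^s}{s!},
\]
expanding $(s+1)^t$ by the binomial theorem and interchanging the two sums gives
\[
\frac{\partial}{\partial x} f(x,t) = \sum_{m=0}^{t} \binom{t}{m} f(x,m) = f(x,t) + \sum_{m=0}^{t-1} \binom{t}{m} f(x,m).
\]
Writing the left side as $(h_t'(x) + h_t(x))\,e^x$ and substituting the inductive hypothesis $f(x,m) = g_m(x)e^x$ for $m<t$, the $f(x,t)$ term on the right cancels the $h_t(x)e^x$ on the left, leaving
\[
h_t'(x) = \sum_{m=0}^{t-1} \binom{t}{m} g_m(x),
\]
which is exactly the recursion defining $g_t'(x)$.

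Thus $h_t$ and $g_t$ have the same derivative, and to conclude $h_t = g_t$ one needs to match a constant of integration. The natural choice is provided by the series itself: evaluating at $x=0$ gives $f(0,t) = 0$ for $t\ge 1$ (only the $r=0$ term survives, and $0^t=0$), hence $h_t(0)=0$. The statement of the recursion should therefore be read with the implicit initial condition $g_t(0)=0$ for $t\ge 1$; with that convention, $h_t = g_t$ and the proof closes.

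The main obstacle is really only bookkeeping: carrying out the binomial expansion and interchange of summation carefully, and recognising that the recursion pins down $g_t$ only up to a constant that must be fixed by evaluating at $x=0$. Term-by-term differentiation of the series is justified by absolute convergence on compact subsets of $\mathbb{R}$, since $r^t x^r / r!$ is dominated by a convergent series, so there is no analytic subtlety.
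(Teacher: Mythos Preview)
Your argument is correct and follows essentially the same route as the paper: differentiate $f(x,t)$ termwise, expand $(s+1)^t$ by the binomial theorem to obtain $f'(x,t)=\sum_{m=0}^{t}\binom{t}{m}f(x,m)$, and deduce that $e^{-x}f(x,t)$ satisfies the defining recursion for $g_t$. You are in fact more careful than the paper in setting up the induction and in flagging that the recursion only determines $g_t$ up to an additive constant, which you fix via $f(0,t)=0$ for $t\ge 1$; the paper's proof leaves this point implicit.
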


\begin{proof}
Writing slightly differently we have $f(x,t)=\sum_{r=0}^{\infty}(r+1)^t\cfrac{x^{r+1}}{(r+1)!}$. Therefore:
\[
f^\prime(x,t)=\sum_{r=0}^{\infty}(r+1)^t\cfrac{x^r}{r!}=\sum_{r=0}^{\infty}\sum_{m=0}^{t}\binom{t}{m}r^m\cfrac{x^r}{r!}=\sum_{m=0}^{t}\binom{t}{m}f(x,m)
\]
So we have $\left(e^{-x}f(x,t)\right)^\prime=\sum_{m=0}^{t-1}\binom{t}{m}e^{-x}f(x,m)$.
\end{proof}

[incomplete, the asymptotic log likelihood function?]

\subsection{Likelihood Function under WOR}
The likelihood function is more tractable under WOR sampling. If $\gamma$ were a constant, the univariate conditional distribution of $n_j$ would be a Poisson with the parameter adjusted by sampling ratio: 

\[
\begin{array}{rcl}
\text{Pr}(n_j=t|X_j)&=& \sum_{r\geq t}^{\infty} \text{Pr}(n_j=t|N_j=r) \text{Pr}(N_j=r|X_j) \\
&=& \sum_{r\geq t}^{\infty}\binom{r}{t}\gamma^t(1-\gamma)^{r-t}e^{-X_j^\prime \beta}\cfrac{(X_j^\prime \beta)^r}{r!} \\
&=& e^{-X_j^\prime \beta}\cfrac{(\gamma X_j^\prime \beta)^t}{t!}\sum_{r\geq t}^{\infty} \cfrac{((1-\gamma)X_j^\prime \beta)^{r-t}}{(r-t)!} \\
&=& e^{-X_j^\prime \beta}\cfrac{(\gamma X_j^\prime \beta)^t}{t!}\; e^{(1-\gamma)X_j^\prime \beta} \\
&=& \cfrac{(\gamma X_j^\prime \beta)^t}{t!}\; e^{-\gamma X_j^\prime \beta}
\end{array}
\]
The actual asymptotic likelihood of $n_j$ demands more effort to derive.
\begin{lemma}
For a fixed positive integer $k$ and,
\[
\sum_{i=0}^{M}e^{-\epsilon_M(i+k)^2+\delta_M(i+k)}\;\frac{x^i}{i!}\longrightarrow e^x ,\quad \text{as $M\to\infty$ and $\epsilon_M\overset{+}{\to} 0$ and $\delta_M\to 0$}
\]
assuming that $\delta_M=O(\sqrt{\epsilon_M})$.
\end{lemma}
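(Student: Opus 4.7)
The plan is to bound the weight $w_{M,i}:=e^{-\epsilon_M(i+k)^2+\delta_M(i+k)}$ uniformly in $i$ and $M$, combine this with its pointwise limit $w_{M,i}\to 1$, and then pass the limit through the sum by a head-and-tail (equivalently, dominated-convergence) argument.

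The heart of the matter is the uniform bound, and I expect this to be the only real obstacle. Completing the square in the exponent gives
\[
-\epsilon_M y^2+\delta_M y \;=\; -\epsilon_M\Bigl(y-\tfrac{\delta_M}{2\epsilon_M}\Bigr)^{2}+\tfrac{\delta_M^2}{4\epsilon_M} \;\le\; \tfrac{\delta_M^2}{4\epsilon_M},
\]
so $w_{M,i}\le e^{\delta_M^2/(4\epsilon_M)}$ for every $i\ge 0$. The assumption $\delta_M=O(\sqrt{\epsilon_M})$ is precisely what keeps this peak height bounded; under it, $\delta_M^2/(4\epsilon_M)\le C$ for some constant $C$ and all large $M$, yielding a uniform bound $w_{M,i}\le W:=e^{C}$. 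Without this strengthening of $\delta_M\to 0$, the Gaussian peak located near $i^{\ast}\approx\delta_M/(2\epsilon_M)$ could diverge, so domination would fail; this is where the hypothesis is genuinely being used.

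Given the uniform bound, the rest is routine. For each fixed $i$, $\epsilon_M(i+k)^2\to 0$ and $\delta_M(i+k)\to 0$, hence $w_{M,i}\to 1$. I would then fix $\eta>0$, choose $I$ large enough that $W\sum_{i>I}|x|^{i}/i!<\eta$, and choose $M$ large enough that $M\ge I$ and $|w_{M,i}-1|<\eta$ for each $i\in\{0,1,\dots,I\}$ (only finitely many convergent sequences are involved). Writing
\[
\sum_{i=0}^{M} w_{M,i}\,\frac{x^{i}}{i!}-e^{x} \;=\; \sum_{i=0}^{I}(w_{M,i}-1)\,\frac{x^{i}}{i!}+\sum_{i=I+1}^{M} w_{M,i}\,\frac{x^{i}}{i!}-\sum_{i>I}\frac{x^{i}}{i!},
\]
each of the three terms is bounded by a constant multiple of $\eta$: the head by $\eta\,e^{|x|}$ via $|w_{M,i}-1|<\eta$, the middle tail by $W\eta$ via $w_{M,i}\le W$ and the choice of $I$, and the residual tail by $\eta$ directly. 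Since $\eta$ was arbitrary, the stated limit follows.

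An equivalent and slightly slicker packaging is to treat $i\mapsto w_{M,i}\,x^{i}/i!$ as a sequence on $\mathbb{Z}_{\ge 0}$ dominated by the summable envelope $W\,|x|^{i}/i!$ and invoke the dominated convergence theorem for counting measure; the two presentations cost the same effort, but the head-and-tail version keeps the role of $W$ (and hence of the $O(\sqrt{\epsilon_M})$ hypothesis) fully visible.
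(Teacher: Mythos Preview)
Your argument is correct, and in fact it closes a gap that the paper's own proof leaves open. Both proofs follow the same head--and--tail strategy: freeze a truncation index, let $M\to\infty$ on the body, and control the tail separately. The difference lies in how the tail is controlled. The paper asserts that on the tail the weight is maximised at the first index, i.e.\ $e^{-\epsilon_M(i+k)^2+\delta_M(i+k)}\le e^{-\epsilon_M(m+k)^2+\delta_M(m+k)}$ for $i>m$; the author even flags this step with ``(why?)''. That monotonicity is false in general: the exponent $-\epsilon_M y^2+\delta_M y$ peaks at $y=\delta_M/(2\epsilon_M)$, which under $\delta_M\asymp\sqrt{\epsilon_M}$ tends to infinity, so for any fixed $m$ the maximum of the weight over $i>m$ is eventually attained deep in the tail, near $e^{\delta_M^2/(4\epsilon_M)}$, not at $i=m$. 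Your completing-the-square bound $w_{M,i}\le e^{\delta_M^2/(4\epsilon_M)}\le e^{C}$ is exactly the right replacement: it is uniform in $i$, it is precisely where the hypothesis $\delta_M=O(\sqrt{\epsilon_M})$ is used, and it feeds directly into either the explicit head--and--tail estimate you wrote or the dominated-convergence packaging. So your route is not merely an alternative; it is the repair the paper's proof needs.
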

\begin{proof}
For a fixed $m<M$, sufficiently large, truncate the sum at $i=m$. The following holds for the body and the tail:
\[
\begin{array}{rcl}
\sum_{i=0}^{m}e^{-\epsilon_M(i+k)^2+\delta_M(i+k)}\;\frac{x^i}{i!}&=&e^{O(\text{max}\{\epsilon_M,\delta_M\})}\;\sum_{i=0}^{m}\frac{x^i}{i!} \\
\sum_{i=m+1}^{M}e^{-\epsilon_M(i+k)^2+\delta_M(i+k)}\;\frac{x^i}{i!}&<&e^{-\epsilon_M(m+k)^2+\delta_M(m+k)}\;\sum_{i=m+1}^{M}\frac{x^i}{i!}\quad\text{(why?)}
\end{array}
\]
In asymptotics, the tail converges to zero and the body become $\sum_{i=0}^{m}\frac{x^i}{i!}$. Therefore enlarging $m$ will proof the claim.
\end{proof}
\begin{prop}
The chance of observing $n_k$ subject in the sample for case $k$ and the total population to be $N_*$ is:
\[
(n_k, N_*) \overset{d}{\longrightarrow} \text{Poisson}(\gamma X_k^\prime \beta)\bigotimes \mathcal{N}(\sum_{j\not=k}X_j^\prime \beta,\sum_{j\not=k}X_j^\prime \beta),\quad \text{as $J\to\infty$}
\]
\end{prop}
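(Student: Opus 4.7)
The plan is to factor the joint pmf $\Pr(n_k = t, N_* = m)$ into a Poisson factor in $t$ and a Gaussian factor in $m$, building on Theorem 5.2 together with the compound Poisson--Binomial computation that already appeared at the start of the WOR likelihood subsection. Conditioning on the split of $N_*$ between case $k$ and the rest, and using independence of the Poisson priors on the $N_j$'s,
\begin{equation*}
\Pr(n_k = t,\, N_* = m) \;=\; \sum_{r \geq t} \Pr\bigl(n_k = t \,\big|\, N_k = r,\, N_* = m\bigr)\,\Pr(N_k = r)\,\Pr\bigl(\textstyle\sum_{j \neq k} N_j = m - r\bigr).
\end{equation*}
By Proposition 5.1 (the WOR marginal), the inner conditional probability converges as $J \to \infty$ to $\binom{r}{t}\gamma_m^{\,t}(1-\gamma_m)^{r-t}$, where $\gamma_m := n_*^{(J)}/m$ differs from the limiting $\gamma$ by an $O(N_*^{-1/2})$ perturbation once $m$ is restricted to the Gaussian window around its mean.

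For the remaining factor, $\sum_{j \neq k} N_j$ is a Poisson with diverging mean $\mu := \sum_{j \neq k} X_j'\beta$, so a Stirling-based local CLT yields
\begin{equation*}
\Pr\bigl(\textstyle\sum_{j \neq k} N_j = m - r\bigr) \;=\; \frac{1}{\sqrt{2\pi \mu}}\exp\!\Bigl(-\frac{(m-r-\mu)^2}{2\mu}\Bigr)\bigl(1+o(1)\bigr),
\end{equation*}
uniformly in $r$ over the contributing range. Expanding the quadratic in $r$ around $m - \mu$ produces a leading Gaussian factor in $m$ times a correction of the form $\exp\!\bigl(-\epsilon_M r^2 + \delta_M r\bigr)$ with $\epsilon_M = O(1/\mu)$ and $\delta_M = O(1/\sqrt{\mu})$, exactly the scaling required by the hypothesis $\delta_M = O(\sqrt{\epsilon_M})$ of Lemma 6.1. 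Pushing this correction inside the sum over $r$ and combining with the Binomial--Poisson identity already used in the univariate WOR calculation, Lemma 6.1 lets the $r$-sum collapse to $e^{\gamma X_k'\beta}$ up to $1 + o(1)$, leaving precisely the Poisson mass at $t$ with mean $\gamma X_k'\beta$.

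The main obstacle is not the leading-order computation but the cross-dependence through $\gamma_m$: because the sampling ratio in the hypergeometric mechanism genuinely depends on $N_*$, one must argue that the fluctuations of $m$ on scale $\sqrt{\mu}$ only perturb $\gamma_m$ on scale $1/\sqrt{N_*^{(J)}}$, and that these perturbations, after being raised to the Poisson index $r$, are absorbed into the $1 + o(1)$ factor rather than biasing the Poisson limit. Lemma 6.1 is tailored to exactly this bookkeeping. Once it is applied, the joint pmf factorizes into Poisson$(\gamma X_k'\beta)$ at $t$ and $\mathcal{N}(\mu,\mu)$ at $m$, yielding both the announced marginals and asymptotic independence of $n_k$ and $N_*$.
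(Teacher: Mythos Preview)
Your proposal is correct and follows essentially the same route as the paper: condition on $N_k=r$, replace the hypergeometric marginal by its Binomial limit, apply the local CLT to $\sum_{j\neq k}N_j$, expand the Gaussian exponent via the identity $(m-r-\mu)^2=(m-\mu)^2-2(m-\mu)r+r^2$, and then invoke Lemma~6.1 to collapse the $r$-sum to $e^{(1-\gamma)X_k'\beta}$. Your explicit bookkeeping of $\gamma_m=n_*^{(J)}/m$ versus the limiting $\gamma$ is a point the paper handles only implicitly through the final $\mathrm{plim}$ remark, but the underlying argument is the same.
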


\begin{proof}
Let's $\textbf{X}=(X_1,\ldots,X_J)$ be the matrix of characteristics,
\[
\begin{array}{rcl}
\text{Pr}(n_k=t_k,N_*,n_*|\textbf{X})&=&\sum_{r_k=t_k}^{N_*}\text{Pr}(n_k=t_k|N_*,n_*,N_k=r_k,\textbf{X})\cdot \text{Pr}(N_*,n_*|N_k=r_k,\textbf{X})\cdot \text{Pr}(N_k=r_k|\textbf{X}) \\
&=&\sum_{r_k=t_k}^{N_*}\binom{r_k}{t_k}\gamma^{t_k}(1-\gamma)^{r_k-t_k}\frac{1}{\sqrt{2\pi \sum_{j\not=k}X^\prime_j \beta}}\; e^{-\frac{(N_*-r_k-\sum_{j\not=k}X_j^\prime \beta)^2}{2\sum_{j\not=k}X_j^\prime \beta}} e^{-X^\prime_k \beta}\frac{(X^\prime_k \beta)^{r_k}}{r_k!} \\
&=&e^{-X^\prime_k \beta} \frac{(\gamma X^\prime_k \beta)^{t_k}}{t_k!}\times\frac{1}{\sqrt{2\pi \sum_{j\not=k}X^\prime_j \beta}}\; e^{-\frac{(N_*-\sum_{j\not=k}X_j^\prime \beta)^2}{2\sum_{j\not=k}X_j^\prime \beta}}  \\
&&\quad\quad\quad\quad\quad\quad\sum_{r_k=t_k}^{N_*}e^{\frac{-r^2_k}{2\sum_{j\not=k}X^\prime_j \beta}} \text{exp}\left(\frac{N_*-\sum_{j\not=k}X_j^\prime \beta}{2\sum_{j\not=k}X_j^\prime \beta}\right)^{r_k} \cfrac{((1-\gamma)X^\prime_k \beta)^{r_k-t_k}}{(r_k-t_k)!} \\
&\to&e^{-\gamma X_k^\prime \beta} \frac{(\gamma X_k^\prime \beta)^{t_k}}{t_k!} \times \frac{1}{\sqrt{2\pi \sum_{j\not=k}X^\prime_j \beta}}\; e^{-\frac{(N_*-\sum_{j\not=k}X_j^\prime \beta)^2}{2\sum_{j\not=k}X_j^\prime \beta}} 
\end{array}
\] 
The first equation is law of total probability. In the second equation, I apply central limit theorem to the sum of independent poisson variables, $N_1,\ldots,N_J$ given $\textbf{X}$. The third one uses the identity $\frac{(N_*-r_k-\sum_{j\not=k}X_j^\prime \beta)^2}{2\sum_{j\not=k}X_j^\prime \beta}=\frac{(N_*-\sum_{j\not=k}X_j^\prime \beta)^2}{2\sum_{j\not=k}X_j^\prime \beta}-\frac{N_*-\sum_{j\not=k}X_j^\prime \beta}{\sum_{j\not=k}X_j^\prime \beta}\; r_k+\frac{r^2_k}{2\sum_{j\not=k}X^\prime_j \beta}$. In the fourth, I use the lemma 4.1 and the implication of CLT that  $\underset{J\to\infty}{\text{plim}}\left(\frac{N_*-\sum_{j\not=k}X_j^\prime \beta}{2\sum_{j\not=k}X_j^\prime \beta}\right)=\underset{J\to\infty}{\text{plim}}\left(\frac{N_*-\sum_{j=1}^J X_j^\prime \beta}{2\sum_{j=1}^J X_j^\prime \beta}\right)=0$ with the rate of convergence equal $\frac{1}{\sqrt{\sum_{j=1}^J X_j^\prime \beta}}$.
\end{proof}
[incomplete, the full asymptotic log likelihood function?]

\section{Discussion}
It is common to predict a count variable with an ordered probit functional form:
\[
N_j>t \quad \text{if } X_j^\prime \beta+\epsilon_j > c_t\quad \text{for } t=0,1,\ldots
\]
where $\epsilon_j$ is a standard normal. $c_t$'s are parameters to be estimated, often normalized to have $c_0=0$. 

[incomplete]
\end{document}